\documentclass[11pt]{amsart}  
\usepackage{amssymb, eucal}
\usepackage[all,arc]{xy}
\usepackage{enumerate}
\usepackage{color}

\newenvironment{tfae}{
\begin{enumerate}}{\end{enumerate}}

\newtheorem{prop}{Proposition}[section]
\newtheorem{lemma}[prop]{Lemma}
\newtheorem{theorem}[prop]{Theorem}

\theoremstyle{definition}
\newtheorem{definition}[prop]{Definition}

\newtheorem{examples}[prop]{Examples}
\newtheorem{remark}[prop]{Remark}

\counterwithin*{equation}{section}

\newdir{ >}{{}*!/-8pt/@{>}}

\def\mathrmdef#1{\expandafter\def\csname#1\endcsname{{\rm#1}}}
\mathrmdef{Aut}\mathrmdef{can}\mathrmdef{ev}\mathrmdef{Ker}\mathrmdef{id}\mathrmdef{Id}\mathrmdef{lex}\mathrmdef{max}\mathrmdef{op}\mathrmdef{pr}
\mathrmdef{prod}\mathrmdef{sym}\mathrmdef{Ult}\mathrmdef{Pt}\mathrmdef{SPt}\mathrmdef{End}\mathrmdef{cod}\mathrmdef{SplExt}

\def\mathsfdef#1{\expandafter\def\csname#1\endcsname{{\rm\sf#1}}}

\def\CC{\mathcal{C}}
\def\VVAut{V\hspace*{-1mm}\text{-}\Aut}
\def\VGrp{V\hspace*{-1mm}\text{-}\Grp}
\newcommand{\VCat}{V\hspace*{-1mm}\text{-}\Cat}

\mathsfdef{Alg} \mathsfdef{Bool} \mathsfdef{C}\mathsfdef{Cat} \mathsfdef{CLC}
\mathsfdef{Comp}\mathsfdef{Conn}\mathsfdef{CompHaus}\mathsfdef{ContLat}
\mathsfdef{Cont}\mathsfdef{D}\mathsfdef{Fam} \mathsfdef{Ps} \mathsfdef{Grp}
\mathsfdef{Haus}\mathsfdef{Inf}
\mathsfdef{Lat}\mathsfdef{LC}\mathsfdef{Mon} \mathsfdef{MultiOrd}
\mathsfdef{Ord}\mathsfdef{RelAlg}\mathsfdef{PsTop}\mathsfdef{Rel}
\mathsfdef{Set} \mathsfdef{SGrp} \mathsfdef{Sup} \mathsfdef{Top}
\mathsfdef{OrdGrp}\mathsfdef{ROrdGrp} \mathsfdef{OrdAb}
\mathsfdef{CMon} \mathsfdef{A} \mathsfdef{B}\mathsfdef{Met}\mathsfdef{MetGrp}\mathsfdef{UMet} \mathsfdef{UMetGrp}

\def\relto{{\longrightarrow\hspace*{-2.8ex}{\mapstochar}\hspace*{2.6ex}}}

\def\two{\mbox{\bf 2}}

\begin{document}  
\title{Algebraic exponentiation and action representability for $V$-groups}  
\author{Maria Manuel Clementino}
\address{CMUC, Department of Mathematics, University of Coimbra, 3000-143 Coimbra, Portugal}\thanks{}
\email{mmc@mat.uc.pt}

\author{Andrea Montoli}
\address{Dipartimento di Matematica ``Federigo Enriques'', Universit\`{a} degli Studi di Milano, Via Saldini 50, 20133 Milano, Italy}
\email{andrea.montoli@unimi.it}
\thanks{}

\begin{abstract}
We show that the category of $V$-groups, where $V$ is a cartesian quantale, so in particular the category of preordered groups, is locally algebraically cartesian closed with respect to the class of points underlying the product $V$-category structure. We obtain this by observing that such points correspond to ($\VCat)$-enriched functors from a $V$-group, seen as a one-object $V$-category, to the category $\VGrp$ of $V$-groups. Moreover, we show that the actions corresponding to points underlying the product $V$-category structure are representable.
\end{abstract}

\subjclass[2020]{06F15; 18D20; 18B35; 18D15; 18E13}
\keywords{$V$-group, $S$-protomodular category, action representability, local algebraic cartesian closedness}
\maketitle

\section{Introduction}
The notion of protomodular category \cite{Bourn protomod} allowed a common description of the homological properties of a large family of structures, including non-abelian algebraic structures like groups, rings, Lie algebras, as well as the duals of elementary toposes. In order to differentiate better the behaviors of such categories, some additional conditions have been considered in the literature. Among them, two particularly strong ones are action representability \cite{BJK} and local algebraic cartesian closedness \cite{Gray}. The first one describes categorically the fact that split epimorphisms between groups correspond to group actions, and those can be represented as homomorphisms to the group of automorphisms. The second one is the algebraic counterpart of the notion of local cartesian closedness, obtained by replacing the basic fibration with the so-called \emph{fibration of points}.

Recently it has been observed that several non-protomodular categories maintain most of the good properties of the protomodular ones, when the attention is restricted to a suitable class of points
. This led to the notion of $S$-protomodular category \cite{BMMS S-protomodular}, where $S$ is a pullback stable class of points. The guiding example of an $S$-protomodular category is the category $\Mon$ of monoids with the class $S$ of Schreier points \cite{MMS mon w op}. In \cite{CMFM} some categorical properties of the category $\OrdGrp$ of preordered groups have been explored. In particular, it was observed that $\OrdGrp$ is $S$-protomodular with respect to the class $S$ of points whose domain, seen as a semidirect product, is equipped with the product preorder. Some of the properties studied in \cite{CMFM} have been then extended in \cite{CM} to $V$-groups, where $V$ is a commutative and unital quantale. The case of preordered groups is an instance of this general situation, when $V = \two$ is the two-element order $0 \leq 1$.

Relative versions of the additional notions we mentioned before have been considered in $S$-protomodular categories \cite{Schreier book, MMS SLACC}. The aim of the present paper is to show that, when $V$ is a cartesian quantale (meaning that $V$ is a cartesian monoidal category with respect to the order in $V$), the category $\VGrp$ of $V$-groups is $S$-protomodular with respect to the class $S$ of points whose domains have the product structure. Moreover, it is both $S$-action representable and $S$-LACC with respect to the same class $S$. This is obtained by extending to the case of $V$-groups the analogous results proved for the category of monoids in \cite{Schreier book, MMS SLACC}.

\section{Protomodular and $S$-protomodular categories}
We start by recalling the definition of a protomodular category \cite{Bourn protomod}. There are several equivalent ways to express it. The one which is more suitable for our purposes makes use of some features of the so-called fibration of points. By \emph{point} we mean a split epimorphism with a chosen splitting. More explicitly, a \textit{point} in a category $\mathcal{C}$ is a pair $(f, s)$, where $f \colon A \to B$, $s \colon B \to A$ and $fs = 1_B$. A morphism between two points $(f, s)$ and $(f', s')$ is a pair $(g, h)$ of morphisms such that the following diagram commutes both upwards and downwards:
\[ \xymatrix{ A \ar[r]^g \ar@<-2pt>[d]_f & A' \ar@<-2pt>[d]_{f'} \\
B \ar[r]_h \ar@<-2pt>[u]_s & B'. \ar@<-2pt>[u]_{s'} } \]
If the category $\mathcal{C}$ has pullbacks, the functor
\[ \text{cod} \colon \Pt(\mathcal{C}) \to \mathcal{C} \]
with domain the category of points in $\mathcal{C}$, associating to any point its codomain, is a fibration, called the \emph{fibration of points}.

\begin{definition}[\cite{Bourn protomod}]
A category $\mathcal{C}$ with pullbacks is \emph{protomodular} if all change-of-base functors of the fibration of points are conservative.
\end{definition}

For pointed categories, protomodularity is equivalent to the validity of the Split Short Five Lemma. This notion captures some key properties of non-abelian algebraic structures, like groups, rings, Lie algebras, but it also includes several less algebraic examples, like the dual of every elementary topos. In order to distinguish all these examples in a categorical way, some additional conditions have been considered by different authors. One we are particularly interested in is the following:

\begin{definition}[\cite{Gray}] \label{def lacc category}
A finitely complete category $\mathcal{C}$ is \emph{locally algebraically cartesian closed}, or briefly LACC, if, for every morphism $h \colon X \to Y$, the corresponding change-of-base functor of the fibration of points $h^*\colon \Pt_Y(\CC)\to \Pt_X(\CC)$ has a right adjoint.
\end{definition}

This is a rather restrictive condition, which is not satisfied by many examples of protomodular categories. The main examples having this property are the category of groups and the one of Lie algebras over a fixed commutative ring. Some other categories satisfy this property for some good subfibrations of the fibration of points. In order to express this relative condition in a precise manner, we recall the following definition:

\begin{definition}[\cite{BMMS S-protomodular}] \label{def S-protomodular category}
Let $S$ be a class of points in a finitely complete category $\mathcal{C}$ which is stable under pullbacks along any morphism. The category $\mathcal{C}$ is \emph{$S$-protomodular} if:
\begin{enumerate}
\item every point $(f, s)$ in $S$ is a strong point, meaning that, in every pullback diagram
\[ \xymatrix{ P \ar[r]^{\bar{g}} \ar@<-2pt>[d]_{\bar{f}} & A \ar@<-2pt>[d]_f \\
C \ar[r]_g \ar@<-2pt>[u]_{\bar{s}} & B, \ar@<-2pt>[u]_{s} } \]
the morphisms $\bar{g}$ and $s$ are jointly extremal epimorphic;

\item $\SPt(\mathcal{C})$ is closed under finite limits in
$\Pt(\mathcal{C})$.
\end{enumerate}
\end{definition}

The class $S$ determines a subfibration of the fibration of points:
\[ S\text{-cod} \colon \SPt(\mathcal{C}) \to \mathcal{C}, \]
and, in every $S$-protomodular category, the change-of-base functors of this subfibration are conservative \cite[Theorem 3.2]{BMMS S-protomodular}. So, in particular, every protomodular category is $S$-protomodular with respect to the class $S$ of all points.

The guiding example of $S$-protomodular category is the category \Mon\ of monoids with respect to the class $S$ of \emph{Schreier} points:

\begin{definition}[\cite{MMS mon w op}] \label{def Schreier split epi}
A point $\xymatrix{ A \ar@<-2pt>[r]_f & B \ar@<-2pt>[l]_s }$ in the category \Mon\ of monoids is a \emph{Schreier point} if, for any $a \in A$, there exists a unique $\alpha$ in the kernel $\text{Ker}(f)$ of $f$ such that $a = \alpha \cdot sf(a)$.
\end{definition}

As shown in \cite{MMS mon w op}, Schreier points correspond to classical monoid actions, where an action of a monoid $B$ on a monoid $X$ is a monoid homomorphism $B \to \End(X)$, $\End(X)$ being the monoid of endomorphisms of $X$. Such correspondence is obtained via a semidirect product construction, generalizing the well-known fact that split epimorphisms of groups correspond to group actions. In the case of monoids, the correspondence is kept by restricting the study to Schreier split epimorphisms. It was observed in \cite{Schreier book} that the class $S$ of Schreier points in \Mon\ satisfies all the conditions to turn \Mon\ an $S$-protomodular category. Moreover, many algebraic and homological properties of the category \Grp\ of groups are still valid in $\Mon$, when restricted to Schreier points.

Another example of an $S$-protomodular category which is relevant for our purposes is the category \OrdGrp\ whose objects are the preordered groups, that are groups equipped with a preorder (i.e. a reflexive and transitive relation) such that the group operation is monotone, and whose morphisms are the monotone group homomorphisms. Some categorical properties of \OrdGrp\ have been explored in \cite{CMFM}. In particular, it was observed there that \OrdGrp\ possesses a good class of points turning it an $S$-protomodular category. Let us briefly describe them. A split extension (with its kernel) in \OrdGrp\
\[ \xymatrix{ X \ar[r]^k & A \ar@<-2pt>[r]_p & B \ar@<-2pt>[l]_s} \]
is such that, in $\Grp$, $A$ is isomorphic to a semidirect product $X \rtimes_\varphi B$, and the split extension is isomorphic to
\begin{equation} \label{eq:split}
\xymatrix{ X \ar[r]^-{\langle 1,0\rangle} & X \rtimes_\varphi B \ar@<-2pt>[r]_-{\pi_B} & B. \ar@<-2pt>[l]_-{\langle 0,1\rangle}}
\end{equation}
As observed in \cite{CMFM}, on the group $X \rtimes_\varphi B$ there may be many preorder structures turning \eqref{eq:split} a split extension in $\OrdGrp$. Let $S$ be the class of points \eqref{eq:split} in \OrdGrp\ such that the product projection $\pi_X \colon X \rtimes_\varphi B \to X$ is monotone (it is not a group homomorphism, unless the action $\varphi$ is trivial). This is equivalent to saying that the preorder on $X \rtimes_\varphi B$ is the product preorder or that, if we restrict the point to the positive cones, we obtain a Schreier point of monoids. It was shown in \cite{CMFM} that \OrdGrp\ is $S$-protomodular with respect to this class $S$. The same fact remains true if we replace \OrdGrp\ with the category of $V$-groups for a cartesian quantale $V$, as we will show in the next sections. \\

Properties like local algebraic cartesian closedness can be studied in the context of $S$-proto\-modular categories, by restricting the attention to the subfibration $S\text{-cod} \colon \SPt(\mathcal{C}) \to \mathcal{C}$ of the fibration of points:

\begin{definition}[\cite{MMS SLACC}] \label{def S-lacc category}
An $S$-protomodular category $\mathcal{C}$ is \emph{$S$-locally algebraically cartesian closed}, or \emph{$S$-LACC}, if, for every morphism $h \colon X \to Y$, the corresponding change-of-base functor of the subfibration $S\text{-cod}$:
\[ h^* \colon \SPt_Y(\mathcal{C}) \to \SPt_X(\mathcal{C}) \]
has a right adjoint.
\end{definition}

It was proved in \cite[Proposition 4.4]{MMS SLACC} that \Mon\ is $S$-LACC with respect to the class $S$ of Schreier points. One of our main aims will be to show that the same argument can be extended to preordered groups, and, more generally, to $V$-groups, at least when $V$ is a cartesian quantale. \\

Another additional condition which is often considered to classify protomodular categories is action representability:

\begin{definition}[\cite{BJK}] \label{def action representable cat}
A pointed protomodular category $\mathcal{C}$ is \emph{action representable} if, for every object $X$, the functor $\Pt(\;\; , X) \colon \mathcal{C} \to \Set$ sending $Y \in \mathcal{C}$ to the set of isomorphism classes of split extensions with codomain $Y$ and kernel $X$ is representable.
\end{definition}

Also this condition is quite restrictive. Again, the main examples are the category of groups, where the representing object is the group of automorphisms, and the one of Lie algebras, where the representing object is the Lie algebra of derivations. We can consider a relative version of this notion, with respect to a good class $S$ of points:

\begin{definition} \label{def S-action representable cat}
We say that a pointed $S$-protomodular category $\mathcal{C}$ is \emph{$S$-action representable} if, for every object $X$, the functor $\SPt(\;\; , X) \colon \mathcal{C} \to \Set$ sending $Y \in \mathcal{C}$ to the set of isomorphism classes of split extensions in $S$ with codomain $Y$ and kernel $X$ is representable.
\end{definition}

It was shown in \cite[Section 5]{Schreier book} that $\Mon$ is $S$-action representable for the class $S$ of Schreier points. We will show that the category of $V$-groups, where $V$ is a cartesian quantale, is $S$-action representable for a suitable class $S$. This will apply, in particular, to the case of preordered groups. (For the study of representability of other special classes of points of preordered groups we refer to \cite[Section 5]{CRuivo}.)

\section{$V$-categories and $V$-groups}
Let $V$ be a commutative and unital quantale. This means that $V$ is a complete lattice, with top element $\top$ and bottom $\bot$, equipped with a symmetric and associative tensor product $\otimes$, with unit $\kappa$, which preserves arbitrary joins. We will also assume that arbitrary joins distribute over finite meets.

A \emph{$V$-relation} from a set $X$ to a set $Y$ is a map $X\times Y\to V$. A \emph{$V$-category} is a pair $(X,a)$, where $X$ is a set and $a\colon X\relto X$ is a $V$-relation such that
\[1_X\leq a\mbox{ and }a\cdot a\leq a.
\]
The two conditions above can be equivalently formulated as:
\begin{description}
\item[\rm (R)] $(\forall \, x\in X)\;\;\;\kappa\leq a(x,x)$,
\item[\rm (T)] $(\forall \, x,x',x''\in X)\;\;\; a(x,x')\otimes a(x',x'')\leq a(x,x'')$.
\end{description}
Property (R) is called \emph{reflexivity} while (T) is \emph{transitivity}; they are also called, respectively, \emph{unit} and \emph{associativity} axioms.

A \emph{$V$-functor} $f\colon(X,a)\to(Y,b)$ is a map $f \colon X \to Y$ such that $f\cdot a\leq b\cdot f$, or, in other terms,
\[(\forall x,x'\in X)\;\;\;a(x,x')\leq b(f(x),f(x')).\]
$V$-categories and $V$-functors form a category, denoted by $\VCat$.

A \emph{$V$-group} is a $V$-category $(X,a)$ equipped with a group structure $(X,+\colon X\times X\to X,i\colon X\to X,0\colon I \to X)$ such that $+\colon(X,a)\otimes (X,a)\to (X,a)$ is a $V$-functor. We observe that $0\colon(1,\kappa)\to (X,a)$, as every map from $(1,\kappa)$ to $(X,a)$, is a $V$-functor, while we do not require that the inversion map $i\colon(X,a)\to(X,a)$ is a $V$-functor. We will use the additive notation although our groups need not be abelian. A \emph{$V$-homomorphism} $f\colon(X,a)\to(Y,b)$ between two $V$-groups is a $V$-functor which is also a group homomorphism. We will denote by $\VGrp$ the category of $V$-groups and $V$-homomorphisms.

\begin{examples}
\begin{enumerate}
\item If $V=\two=(\{\bot,\top\},\leq)$ with $\otimes=\wedge$, then $\two$-$\Cat$ is the category \Ord\ of preordered sets and monotone maps, while $\two$-$\Grp$ is the category \OrdGrp\ of preordered groups and monotone group homomorphisms studied in \cite{CMFM}.

\item When $V=P_+=([0,\infty],\geq)$ is the complete real half-line, with $\otimes=+$, and then $\hom(u,v)=v\ominus u=\max(v-u,0)$, $P_+$-$\Cat$ is the category \Met\ of Lawvere (generalized) metric spaces and non-expansive maps, while $P_+$-$\Grp$ is the category \MetGrp\ whose objects are the (generalized) metric groups, i.e. the Lawvere generalized metric spaces with a group operation which is a non-expansive map, and whose arrows are the non-expansive group homomorphisms.

\item If we take instead $P_\max$, so that in $([0,\infty],\geq)$ we take $\otimes=\wedge$ (note that this is $\max$ for the usual order in the real numbers), then $P_\max$-$\Cat$ is the category \UMet\ of ultrametric spaces and non-expansive maps, while $P_\max$-$\Grp$ is the category \UMetGrp\ of (generalized) ultrametric groups and non-expansive group homomorphisms.
\end{enumerate}
\end{examples}

We will be interested especially in the particular case in which $V$ is \emph{cartesian, that is, the tensor product $\otimes$ is the cartesian product $\wedge$, and $\kappa=\top$.} One of the key properties of this tensor is its idempotency, meaning $u\otimes u= u$ for all $u\in V$. In fact, its idempotency characterizes it, in the following sense.

\begin{lemma}\label{lem:idpt}
If $V$ is a commutative and unital quantale with an idempotent tensor whose unit $\kappa$ is $\top$, then $V$ is cartesian.
\end{lemma}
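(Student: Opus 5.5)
We have to show that $u\otimes v=u\wedge v$ for all $u,v\in V$. The plan is to prove the two inequalities separately, using only three facts: monotonicity of $\otimes$ in each variable (which follows from preservation of joins), the hypothesis $\kappa=\top$, and idempotency.

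First, $u\otimes v\leq u\wedge v$. Since $v\leq\top=\kappa$ and $\otimes$ is monotone, we get $u\otimes v\leq u\otimes\kappa=u$. Symmetrically, $u\otimes v\leq\kappa\otimes v=v$. Hence $u\otimes v\leq u\wedge v$.

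Second, $u\wedge v\leq u\otimes v$. Put $w=u\wedge v$. Idempotency gives $w=w\otimes w$. Since $w\leq u$ and $w\leq v$, monotonicity in both variables gives $w\otimes w\leq u\otimes v$. Therefore $u\wedge v\leq u\otimes v$, and combining with the first step, $\otimes=\wedge$.

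The unit is already $\top$ by hypothesis, so $V$ is cartesian in the sense defined above. There is no real obstacle. The only point worth stating explicitly is that join preservation implies monotonicity: if $x\leq y$, then $u\otimes y=u\otimes(x\vee y)=(u\otimes x)\vee(u\otimes y)\geq u\otimes x$. The argument does not need the assumption that joins distribute over finite meets.
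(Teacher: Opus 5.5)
Your proof is correct, and it differs from the paper's only in how idempotency is used.

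The paper first proves an auxiliary fact: $u\leq v$ holds exactly when $u\otimes v=u$. One direction is $u\leq v$ implies $u=u\otimes u\leq u\otimes v\leq u$; the other is $u\otimes v=u$ implies $u\leq\kappa\otimes v=v$. It then shows that any common lower bound $w$ of $u,v$ satisfies $w\otimes(u\otimes v)=(w\otimes u)\otimes v=w\otimes v=w$, and hence $w\leq u\otimes v$. This is the classical argument that an idempotent commutative monoid is a meet-semilattice, with the extra observation that its induced order agrees with the given one. It relies on associativity.

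You instead apply monotonicity in both variables at once: $u\wedge v=(u\wedge v)\otimes(u\wedge v)\leq u\otimes v$. This is shorter and never uses associativity. The paper's route gives the order characterization $u\leq v \Leftrightarrow u\otimes v=u$ as a by-product, but the lemma does not need it.

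Your remark that distributivity of joins over finite meets is not needed is also correct; the paper's proof does not use it either.
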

\begin{proof}
Assume that $\kappa=\top$ and $\otimes$ is idempotent. First we point out that, if $u,v\in V$, then $u\leq v$ if and only if $u\otimes v=u$. Indeed, if $u\leq v$, then $u=u\otimes u\leq u\otimes v\leq u$; conversely, if $u\otimes v=u$, then $u=u\otimes v\leq k\otimes v=v$.

Now, we know that, when $\kappa=\top$, $u\otimes v\leq u\wedge v$; to show the converse, we note that, if $u,v,w\in V$ are such that $w\leq u$ and $w\leq v$, then $w\otimes(u\otimes v)=(w\otimes u)\otimes v=w\otimes v=w$, and therefore $w\leq u\otimes v$.
\end{proof}

Even for non-idempotent tensor products one can consider $V$-group structures $(X,a)$ which are idempotent, in the sense that $a(x,x')\otimes a(x,x')=a(x,x')$ for all $x,x'\in X$. This is the case, for instance, of every $V$-group which is preordered (i.e. $a(x,x')$ is either $\bot$ or $\top$).

\section{Action representability for $V$-groups}

From now on, we will assume that $V$ is an integral quantale, which means that $\kappa = \top$. As observed in \cite{CM}, this implies that $\VGrp$ is a pointed category. Let
\begin{equation}\label{split}
\xymatrix{(X,a)\ar[r]^-{\langle 1,0\rangle}&(X\rtimes_\varphi Y,c)\ar@<-.5ex>[r]_-{\pi_2}&(Y,b)\ar@<-.5ex>[l]_-{\langle 0,1\rangle}}
\end{equation}
be a split extension in $\VGrp$, where we are already identifying the middle group with the semidirect product of the groups $X$ and $Y$ with respect to a group action $\varphi \colon Y \to \Aut(X)$. This implies that, for every $y\in Y$, $\varphi_y\colon (X,a)\to(X,a)$ is a $V$-functor \cite[Lemma 7.1]{CM}. We want to consider the class $S$ of points in $\VGrp$ such that, in \eqref{split}, $c$ is the product structure $a \otimes b$. The product structure does not give a split extension in $\VGrp$ for all group actions $\varphi \colon Y \to \Aut(X)$. However, those actions for which this happens have a simple characterization:

\begin{theorem}[\cite{CM}, Theorem 7.2] \label{t:tensor}
Let $X$ and $Y$ be groups, $\varphi \colon Y \to \Aut(X)$ a group action, and let $X \rtimes_{\varphi} Y$ be the semidirect product defined in $\Grp$ by $\varphi$. The following assertions are equivalent:
\begin{tfae}
\item $\xymatrix{(X,a)\ar[r]^-{\langle 1,0\rangle}&(X\rtimes_{\varphi} Y,a\otimes b)\ar@<-.5ex>[r]_-{\pi_2}&(Y,b)\ar@<-.5ex>[l]_-{\langle 0,1\rangle}}$ is a split extension in $\VGrp$;
\item the map $\overline{\varphi} \colon (X\otimes Y,a\otimes b)\to (X\otimes Y,a\otimes b)$, with $(x,y)\mapsto (\varphi_y(x),y)$, is a $V$-functor.
\end{tfae}
\end{theorem}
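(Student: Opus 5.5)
We prove the two implications separately. Throughout, write the semidirect product operation as $(x,y)+(x',y')=(x+\varphi_y(x'),\,y+y')$, and recall that $(a\otimes b)((x,y),(x',y'))=a(x,x')\otimes b(y,y')$. The maps $\langle 1,0\rangle$, $\pi_2$ and $\langle 0,1\rangle$ are always group homomorphisms. They are also always $V$-functors for the product structure, because $V$ is integral: for instance $a(x,x')=a(x,x')\otimes \kappa\leq a(x,x')\otimes b(0,0)$, and $a(x,x')\otimes b(y,y')\leq b(y,y')$ since $a(x,x')\leq\top=\kappa$. Hence (1) is equivalent to the single requirement that $(X\rtimes_\varphi Y,a\otimes b)$ be a $V$-group. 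That is, the addition
\[
+\colon (X\rtimes_\varphi Y,a\otimes b)\otimes(X\rtimes_\varphi Y,a\otimes b)\to (X\rtimes_\varphi Y,a\otimes b)
\]
must be a $V$-functor.

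For (1)$\Rightarrow$(2), we express $\overline{\varphi}$ through the semidirect product operation:
\[
(\varphi_y(x),y)=(0,y)+(x,0)+(0,-y).
\]
This conjugation formula is not directly usable, because inversion need not be a $V$-functor. Instead we write $(\varphi_y(x),y)=(0,y)+(x,0)$. The map $(x,y)\mapsto ((0,y),(x,0))$ is a $V$-functor $(X\times Y,a\otimes b)\to (X\rtimes_\varphi Y)\otimes(X\rtimes_\varphi Y)$. Indeed, the distance it induces is $(b(0,0)\otimes b(y,y'))\otimes(a(x,x')\otimes b(0,0))$, which is at least $a(x,x')\otimes b(y,y')$ by reflexivity, using $\kappa=\top$. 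Composing with the $V$-functor $+$ shows that $\overline{\varphi}$ is a $V$-functor.

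For (2)$\Rightarrow$(1), we must prove
\[
a(x_1,x_1')\otimes b(y_1,y_1')\otimes a(x_2,x_2')\otimes b(y_2,y_2')\leq a\big(x_1+\varphi_{y_1}(x_2),\,x_1'+\varphi_{y_1'}(x_2')\big)\otimes b(y_1+y_2,\,y_1'+y_2').
\]
The $Y$-component is handled by the fact that $+$ is a $V$-functor on $Y$. For the $X$-component, hypothesis (2) applied to $(x_2,y_1)$ and $(x_2',y_1')$ gives
\[
a(x_2,x_2')\otimes b(y_1,y_1')\leq a(\varphi_{y_1}(x_2),\varphi_{y_1'}(x_2'))\otimes b(y_1,y_1')\leq a(\varphi_{y_1}(x_2),\varphi_{y_1'}(x_2')),
\]
the last step by integrality. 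Combining this with $a(x_1,x_1')$ via the $V$-functoriality of $+$ on $X$ bounds the $X$-component.

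The main obstacle is that the factor $b(y_1,y_1')$ is needed twice: once for the $X$-component and once for the $Y$-component. In the cartesian or idempotent case we would simply duplicate it. For a general integral quantale this is not available, so we return to the full $\overline{\varphi}$ hypothesis. We factor the addition as the composite of the $V$-functors
\[
((x_1,y_1),(x_2,y_2))\mapsto ((x_1,y_1),(x_2,y_2))\overset{1\otimes\,?}{\longmapsto}\cdots
\]
more precisely as
\[
(x_1,y_1,x_2,y_2)\mapsto (x_1,\varphi_{y_1}(x_2),y_1,y_2)\mapsto (x_1+\varphi_{y_1}(x_2),\,y_1+y_2).
\]
The first map is $1\otimes\overline{\varphi}\otimes 1$ after a symmetry rearrangement, applied to the pair $(x_2,y_1)$. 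It therefore consumes $b(y_1,y_1')$ exactly once, while keeping the $y_1$ coordinate for the second map. The second map is $+_X\otimes +_Y$, which is a $V$-functor. This factorization avoids any duplication and completes the proof.
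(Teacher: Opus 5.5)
The paper does not prove this statement; it imports it from \cite[Theorem 7.2]{CM}, so there is no in-text proof to compare with. Your argument is correct and self-contained.

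In (1)$\Rightarrow$(2), writing $\overline{\varphi}(x,y)=(0,y)+(x,0)$ and precomposing the addition with $(x,y)\mapsto((0,y),(x,0))$ works. One slip: the first factor of the induced distance is $a(0,0)\otimes b(y,y')$, not $b(0,0)\otimes b(y,y')$. This does not matter, since only $a(0,0),b(0,0)\geq\kappa$ is used.

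In (2)$\Rightarrow$(1), your final factorization is right: first a symmetry, then $1\otimes\overline{\varphi}\otimes 1$ applied to the pair $(x_2,y_1)$, then $+_X\otimes +_Y$. You should delete the abandoned first attempt and the draft line containing $1\otimes ?$.

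One check is missing from your reduction of (1) to \emph{the addition is a $V$-functor}. Being a split extension also requires $\langle 1,0\rangle$ to be the kernel of $\pi_2$ in $\VGrp$. So the structure induced on $X\times\{0\}$, namely $a(x,x')\otimes b(0,0)$, must equal $a(x,x')$. This holds because integrality forces $b(0,0)=\top=\kappa$.

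Your proof also clarifies the one related argument the paper does give, in Proposition~\ref{p:action rep}. In passing from (iii) to (ii) there, the authors use two separate facts: each $\varphi_y$ is a $V$-functor, and $y\mapsto\varphi_y$ is a $V$-functor. Recombining them needs the factor $b(y,y')$ twice, which is why idempotency of $b$ is assumed. That is exactly the obstacle you hit in your first attempt. Your factorization shows why condition (ii) of the theorem needs no such hypothesis. The map $\overline{\varphi}$ carries the coordinate $y_1$ along, so the single factor $b(y_1,y_1')$ is used once and remains available for $+_Y$.

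As a result, your argument works for any integral commutative quantale. Integrality enters only in showing that $\pi_2$ is a $V$-functor and in identifying the kernel.
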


Given a $V$-group $(X, a)$, let
\[ \VVAut(X) = \{ f \colon X \to X \, | \, f  \text{ is an isomorphism in }  \VGrp \}. \]
$\VVAut(X)$ can be equipped with the following structure $c$, which turns it into a $V$-group:
\[ \forall \, f, g \in \VVAut(X) \quad c(f, g) = \underset{x \in X}{\bigwedge} a(f(x), g(x)). \]
In fact, this structure on $\VVAut(X)$ is inherited from the structure of the exponential $[X, X]$ in the monoidal closed category $\VCat$ (see, for instance, \cite{Law}). Hence we can conclude immediately that the evaluation map $\ev\colon X\otimes\VVAut(X)\to X$, defined by $\ev(x,f)=f(x)$, is a $V$-functor.

Theorem \ref{t:tensor} can be complemented as follows:

\begin{prop} \label{p:action rep}
Let $(X,a)$ and $(Y,b)$ be $V$-groups, $\varphi \colon Y \to \Aut(X)$ a group action, and let $X \rtimes_{\varphi} Y$ be the semidirect product defined in $\Grp$ by $\varphi$. Suppose that the $V$-category structure of $Y$ is idempotent. The following assertions are equivalent:
\begin{tfae}
\item $\xymatrix{(X,a)\ar[r]^-{\langle 1,0\rangle}&(X\rtimes_{\varphi} Y,a\otimes b)\ar@<-.5ex>[r]_-{\pi_2}&(Y,b)\ar@<-.5ex>[l]_-{\langle 0,1\rangle}}$ is a split extension in $\VGrp$;
\item the map $\overline{\varphi} \colon (X\otimes Y,a\otimes b)\to (X\otimes Y,a\otimes b)$, with $(x,y)\mapsto (\varphi_y(x),y)$, is a $V$-functor;\vspace*{2mm}
\item The group action $\varphi\colon Y\to\Aut(X)$ can be corestricted to a $V$-homomorphism $\varphi \colon Y \to \VVAut(X)$.
\end{tfae}
\end{prop}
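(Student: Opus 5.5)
Since (i)$\Leftrightarrow$(ii) is exactly Theorem \ref{t:tensor}, it remains to prove (ii)$\Leftrightarrow$(iii). Note first that each $\varphi_y$ is a group automorphism of $X$. Condition (iii) asks two things: that each $\varphi_y$ and its inverse $\varphi_{-y}$ be $V$-functors, so that $\varphi_y\in\VVAut(X)$; and that $y\mapsto\varphi_y$ be a $V$-functor $(Y,b)\to(\VVAut(X),c)$, i.e.
\[ b(y,y')\leq \bigwedge_{x\in X} a(\varphi_y(x),\varphi_{y'}(x))\quad\text{for all } y,y'\in Y. \]
Being a group homomorphism is automatic, since $\varphi$ is an action. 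Condition (ii) unpacks to
\[ a(x,x')\otimes b(y,y')\leq a(\varphi_y(x),\varphi_{y'}(x'))\quad\text{for all } x,x'\in X,\ y,y'\in Y. \]

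For (ii)$\Rightarrow$(iii), take $y=y'$ in (ii). Since $b(y,y)\geq\kappa=\top$, this gives $a(x,x')\leq a(\varphi_y(x),\varphi_y(x'))$, so every $\varphi_y$ is a $V$-functor, and so is its inverse $\varphi_{-y}$. Hence $\varphi_y\in\VVAut(X)$. Next take $x=x'$ in (ii). Since $a(x,x)=\top$, we get $b(y,y')\leq a(\varphi_y(x),\varphi_{y'}(x))$ for every $x$; taking the meet over $x$ gives the required inequality. This direction does not use idempotency.

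For (iii)$\Rightarrow$(ii), we factor $\overline\varphi$ through the evaluation. Given $x,x',y,y'$, apply transitivity (T) through the intermediate point $\varphi_y(x')$:
\[ a(\varphi_y(x),\varphi_{y'}(x'))\geq a(\varphi_y(x),\varphi_y(x'))\otimes a(\varphi_y(x'),\varphi_{y'}(x'))\geq a(x,x')\otimes b(y,y'). \]
The first factor is bounded using that $\varphi_y$ is a $V$-functor, and the second using the $V$-functoriality of $y\mapsto\varphi_y$. This already gives (ii) for the first component of $\overline\varphi$.

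The main obstacle is the second component. We need the pair map $\overline\varphi$ to be a $V$-functor into the product $a\otimes b$, that is,
\[ a(x,x')\otimes b(y,y')\leq a(\varphi_y(x),\varphi_{y'}(x'))\otimes b(y,y'). \]
This is exactly where idempotency of $b$ is used. Since $b(y,y')=b(y,y')\otimes b(y,y')$, we can write $a(x,x')\otimes b(y,y')=\bigl(a(x,x')\otimes b(y,y')\bigr)\otimes b(y,y')$. The bound from the previous paragraph, together with monotonicity of $\otimes$, then gives the desired inequality. Equivalently, $\overline\varphi$ is the composite of $1\otimes\Delta_Y$, then $\ev\circ(1\otimes\varphi)$ paired with the identity on $Y$, and the diagonal $\Delta_Y\colon Y\to Y\otimes Y$ is a $V$-functor precisely because $b$ is idempotent.
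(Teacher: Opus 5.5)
Your proof is correct and is essentially the paper's argument: (ii)$\Rightarrow$(iii) by specializing to $y=y'$ and to $x=x'$ (the paper does this via the composites with $\pi_1$), and (iii)$\Rightarrow$(ii) by transitivity through $\varphi_y(x')$, with idempotency of $b$ used to restore the factor $b(y,y')$ on the right. The only imprecision is your initial ``unpacking'' of (ii): it literally reads $a(x,x')\otimes b(y,y')\leq a(\varphi_y(x),\varphi_{y'}(x'))\otimes b(y,y')$, and your displayed inequality follows from this by integrality ($u\otimes v\leq u$), which is all the direction (ii)$\Rightarrow$(iii) needs.
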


\begin{proof}
We only need to prove the equivalence between (ii) and (iii).

(ii) $\implies$ (iii): For each $y\in Y$, $\varphi_y\colon(X,a)\to(X,a)$ is a $V$-functor, as the composite of the following $V$-functors
\[\xymatrix{X\cong X\otimes 1\ar[r]^-{1_X\otimes y}&X\otimes Y\ar[r]^{\overline{\varphi}}&X\otimes Y\ar[r]^-{\pi_1}&X},\]
hence every $\varphi_y$ is an isomorphism in $\VGrp$. Using again the fact that $\overline{\varphi}$ is a $V$-functor, and so is the following composite for each $x\in X$,
\[\xymatrix{Y\cong 1\otimes Y\ar[r]^-{x\otimes 1_Y}&X\otimes Y\ar[r]^{\overline{\varphi}}&X\otimes Y\ar[r]^-{\pi_1}&X},\]
we get that, for each $y,y'\in Y$,
$b(y,y')\leq a(\varphi_y(x),\varphi_{y'}(x))$,
which gives \[b(y,y')\leq \bigwedge_{x\in X}\,a(\varphi_y(x),\varphi_{y'}(x))=c(\varphi_y,\varphi_{y'}).\]

(iii) $\implies$ (ii): Using the fact that, for all $y$, $\varphi_y$ is a $V$-functor, we get that
\[ \forall \, x, x' \in X \quad a(x, x') \leq a(\varphi_y(x), \varphi_y(x')). \]
Moreover, since by assumption $\varphi$ is a $V$-functor, we have
\[ \forall \, y, y' \in Y \quad b(y, y') \leq \underset{z \in X}{\bigwedge} a(\varphi_y(z), \varphi_{y'}(z)). \]
Combining these two facts and using idempotency of $b$ we get
\[ a(x, x') \otimes b(y, y') = a(x, x') \otimes b(y, y') \otimes b(y, y') \leq a(\varphi_y(x), \varphi_y(x')) \otimes a(\varphi_y(x'), \varphi_{y'}(x')) \otimes b(y, y') \]
which, by transitivity of $\otimes$, is less or equal than
\[ a(\varphi_y(x), \varphi_{y'}(x')) \otimes b(y, y'). \]
\end{proof}

When $V$ is a cartesian quantale, all $V$-groups have idempotent $V$-category structures. In this particular case, the previous theorem tells us that split extensions \eqref{split} underlying the product $V$-category structure correspond to $V$-homomorphisms $\varphi \colon Y \to \VVAut(X)$. This result can be rephrased by saying that the category $\VGrp$ is action representable relatively to a good class $S$ of points. In order to state this fact in a precise way, we first need to observe that $\VGrp$ is an $S$-protomodular category:

\begin{prop} \label{p:S-protomod}
If $V$ is a cartesian quantale, the category $\VGrp$ is $S$-protomodular with respect to the class $S$ of points whose domains have the product structure.
\end{prop}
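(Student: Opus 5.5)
We verify the three requirements of Definition \ref{def S-protomodular category} for the class $S$ of points \eqref{split} with $c=a\otimes b$ (more precisely, points isomorphic to such a point). Throughout, since $V$ is cartesian, $a\otimes b$ is the product structure $((x,y),(x',y'))\mapsto a(x,x')\wedge b(y,y')$. This is the categorical product in $\VCat$, so the product projections are $V$-functors.

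\textbf{Step 1: pullback stability.} Let $(\pi_2,\langle 0,1\rangle)\colon X\rtimes_\varphi Y\to Y$ be in $S$ and let $g\colon(Z,d)\to(Y,b)$ be a $V$-homomorphism. The pullback in $\Grp$ is $X\rtimes_{\varphi g}Z$. In $\VGrp$ pullbacks are computed as in $\Grp$, with the structure induced as a sub-$V$-category of the product $(X\rtimes_\varphi Y)\times Z$. That induced structure is
\[((x,z),(x',z'))\mapsto a(x,x')\wedge b(g z,g z')\wedge d(z,z').\]
Since $d(z,z')\le b(gz,gz')$, this equals $a(x,x')\wedge d(z,z')$, which is the product structure. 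So the pulled-back point lies in $S$. Alternatively, one can use Proposition \ref{p:action rep}: the composite $\varphi g\colon Z\to\VVAut(X)$ is a $V$-homomorphism.

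\textbf{Step 2: strong points.} Take a pullback of a point in $S$ along $g\colon C\to B$, as in the definition. Let $m\colon M\to A$ be a monomorphism through which both $\bar g$ and $s$ factor; we must show $m$ is an isomorphism.
\begin{itemize}
\item On underlying groups, $\bar g$ and $s$ are jointly surjective onto $X\rtimes_\varphi Y$: every $(x,y)=(x,0)+(0,y)$, with $(x,0)$ in the image of $\bar g$ (the kernel lies in the pullback) and $(0,y)=s(y)$. Hence $m$ is bijective.
\item Extremality requires $m$ to be an isomorphism in $\VGrp$, i.e.\ the structure on $M$ must be at least the product structure. Write $(x,y)=\bar g(x,0)+s(y)$ and use that $+$ on $M$ is a $V$-functor. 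We get that the $V$-value between $(x,y)$ and $(x',y')$ in $M$ is at least $a(x,x')\wedge b(y,y')$. Here we use that the kernel inclusion $X\to M$ and $Y\to M$ are $V$-functors, because they factor through $\bar g$ (whose domain has product structure by Step 1) and through $s$.
\item Since $m$ is also a $V$-functor, the structure on $M$ is exactly $a\otimes b$, and $m$ is an isomorphism.
\end{itemize}

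\textbf{Step 3: closure under finite limits in $\Pt(\VGrp)$.} The terminal point $0\rightleftarrows 0$ lies in $S$ trivially, so it suffices to treat pullbacks of points. Limits in $\Pt(\VGrp)$ are computed componentwise, and in $\VGrp$ they are computed as in $\Grp$ with the initial (induced) structure. In $\Grp$, a limit of semidirect products over a cospan is the semidirect product of the limit of the kernels acted on by the limit of the bases. The remaining verification is that the induced structure is the product structure. This is a computation with meets, like Step 1: the infimum of componentwise values of $a_i\wedge b_i$ splits as (infimum of the $a_i$) $\wedge$ (infimum of the $b_i$), using that $\wedge$ is associative, commutative and idempotent.

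I expect the main obstacle to be this identification of the group-level limit with a semidirect product whose kernel and base components separate correctly. One has to check that the limit structure $\bigwedge_i(a_i\wedge b_i)$ coincides with $(\bigwedge_i a_i)\wedge(\bigwedge_i b_i)$ on the nose. Idempotency of $\wedge$ (Lemma \ref{lem:idpt}) is exactly what makes this split work; it would fail for a general tensor.
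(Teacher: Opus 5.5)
Your proof is correct and takes essentially the paper's route. The paper says pullback stability and closure under finite limits are clear for the meet-based product structure, where you spell out the computation. It proves strongness exactly as you do: protomodularity of groups makes $m$ a group isomorphism, and any compatible structure on $X\rtimes_\varphi Y$ dominates $a\otimes b$, which the paper cites from [CM, Prop.~7.6] and you reprove inline from $(x,y)=(x,0)+(0,y)$ and $V$-functoriality of $+$.

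One minor point: the rearrangement of meets in Step~3 uses only associativity and commutativity of $\wedge$. What fails for a general tensor is the mismatch between $\otimes$ in the product structure and $\wedge$ in limits of $V$-categories, not idempotency as such.
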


\begin{proof}
The product $V$-category structure for a cartesian quantale is clearly pullback stable and closed under finite limits, hence the class $S$ is stable under pullbacks along any morphism and closed under finite limits. Let now $\eqref{split}$ be a point in $S$. Consider the following commutative
diagram
\[ \xymatrix{ (X, a) \ar[r]^-{\langle 1, 0 \rangle} \ar[dr]_f & (X
\rtimes_{\varphi} Y, a \otimes b) & (Y, b), \ar[l]_-{\langle 0, 1 \rangle} \ar[dl]^g \\
& Z \ar[u]_m & } \] where $m$ is a monomorphism. Since
$\Grp$ is a protomodular category, $m$ is an isomorphism of groups. Hence, we can identify the $V$-group $Z$ with $(X
\rtimes_{\varphi} Y, c)$ for a suitable $V$-category structure $c$, and $m$ with the identity map, so that $c \leq a \otimes b$. Proposition $7.6$ in \cite{CM} forces then $c = a \otimes b$, since any compatible $V$-category structure on a semidirect product is bigger or equal to the product one. Hence $m$ is an isomorphism of $V$-groups. This shows that every point in $S$ is strong.
\end{proof}

\begin{remark}
In the proof of the previous proposition, we only use the fact that the quantale $V$ is cartesian to show that the class $S$ is stable under pullbacks and closed under finite limits. The proof that every point in $S$ is strong actually works for any integral quantale.
\end{remark}

Combining Propositions \ref{p:action rep} and \ref{p:S-protomod} we immediately get the following:

\begin{theorem}
If $V$ is cartesian, then the category $\VGrp$ is $S$-action representable with respect to the class $S$ of points underlying the product $V$-category structure. The representing object for the functor $\SPt(\;\; , X)$ is $\VVAut(X)$.
\end{theorem}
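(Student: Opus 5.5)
We will show that, for a fixed $V$-group $(X,a)$, there is a bijection $\SPt(Y,X)\cong\VGrp(Y,\VVAut(X))$, natural in $Y\in\VGrp$. By Proposition \ref{p:S-protomod}, $\VGrp$ is a pointed $S$-protomodular category, so Definition \ref{def S-action representable cat} applies, and such a natural bijection is exactly representability by $\VVAut(X)$.

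\emph{Step 1 (the bijection).} Any split extension in $\VGrp$ with kernel $(X,a)$ and codomain $(Y,b)$ is, in $\Grp$, isomorphic to a semidirect product split extension \eqref{split} for a unique action $\varphi\colon Y\to\Aut(X)$. If the extension lies in $S$, the middle object carries the product structure $a\otimes b$. Isomorphism classes of split extensions with fixed kernel and codomain correspond to isomorphisms commuting with the kernel and the section. These are group isomorphisms of semidirect products inducing the identity on $X$ and $Y$, hence they force equal actions. They are automatically $V$-isomorphisms, since both structures are $a\otimes b$. So $\SPt(Y,X)$ is in bijection with the set of actions $\varphi$ for which \eqref{split} with $c=a\otimes b$ is a split extension in $\VGrp$. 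Since $V$ is cartesian, $b$ is idempotent, and Proposition \ref{p:action rep} identifies these actions with the $V$-homomorphisms $Y\to\VVAut(X)$. For this we need $\VVAut(X)$ to be a $V$-group under $c(f,g)=\bigwedge_x a(f(x),g(x))$, as asserted in the text; it is also the $V$-subobject of $[X,X]$ on the invertible $V$-homomorphisms.

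\emph{Step 2 (naturality).} Take a $V$-homomorphism $h\colon Y'\to Y$. The functor $\SPt(-,X)$ acts by pullback along $h$, and pullback of \eqref{split} along $h$ is the semidirect product $X\rtimes_{\varphi h}Y'$. Its structure is the pulled-back product structure, which is again $a\otimes b'$; this uses that $V$ is cartesian and that $S$ is pullback stable, as shown in Proposition \ref{p:S-protomod}. On the other side, precomposition with $h$ sends $\varphi$ to $\varphi h$, so the bijection commutes with the two actions of $h$.

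I expect the main care to be needed in Step 1, in checking that the isomorphism classes really collapse to actions. Concretely, an isomorphism $X\rtimes_\varphi Y\to X\rtimes_{\psi}Y$ compatible with kernel and section must be $(x,y)\mapsto(x,y)$, and hence $\varphi=\psi$. Once that is settled, everything else is a direct combination of Propositions \ref{p:action rep} and \ref{p:S-protomod}.
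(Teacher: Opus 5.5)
Your proposal is correct and takes the paper's route. The paper gets the theorem immediately by combining Proposition \ref{p:action rep} (idempotency of $b$ is automatic for cartesian $V$) with Proposition \ref{p:S-protomod}; you spell out what it leaves implicit, namely that isomorphism classes of split extensions in $S$ collapse to actions, and that the bijection is natural in $Y$ because the product structure is stable under pullback when $V$ is cartesian.
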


The previous theorem is an extension of the fact -- observed in \cite[Section 8]{CM} -- that, when $V$ is cartesian, the category $\VGrp_\sym$ of symmetric $V$-groups (i.e. $V$-groups whose $V$-category structure is symmetric) is action representable. In fact, in $\VGrp_\sym$ every point has the product $V$-category structure, so that our class $S$ coincides with the class of all points in this case.

\section{$\VGrp$ is $S$-LACC}

The argument used in \cite[Proposition 4.4]{MMS SLACC} to show that \Mon\ is $S$-LACC with respect to the class $S$ of Schreier points relies on the observation that a monoid homomorphism $\varphi \colon Y \to \End(X)$ can be equivalently seen as a functor $\Phi \colon Y \to \Mon$, where the monoid $Y$ is considered as a one-object category. A very similar fact is true for $V$-groups when $V$ is cartesian. Indeed, any $V$-group $(Y, b)$, seen as a category with a unique object $*$, has a canonical $(\VCat)$-enrichment, because $Y(*, *)=Y$ is a $V$-category. Moreover, $\VGrp$ can be $(\VCat)$-enriched as follows: given two $V$-groups $(Z, d)$ and $(W, d')$, $\VGrp((Z, d), (W, d'))$ has a natural $V$-category structure $\tilde{d}$ given by
\[ \tilde{d}(f, g) = \underset{z \in Z}{\bigwedge} d'(f(z), g(z)), \]
again, inherited from the exponential $[(Z,d),(W,d')]$ in $\VCat$.
Hence, given a $V$-homomorphism $\varphi \colon Y \to \VVAut(X)$, where $(X, a)$ and $(Y, b)$ are $V$-groups, the map
\[ Y(*, *) \to \VGrp(X, X) \]
sending $y$ to $\varphi_y$ is a $V$-functor because, by assumption, $\varphi\colon Y\to\VVAut(X)$ is a $V$-functor, and so
\[ b(y, y') \leq \underset{x \in X}{\bigwedge} a(\varphi_y(x), \varphi_{y'}(x)). \]
Thus we get a (\VCat)-enriched functor $\Phi \colon (Y, b) \to \VGrp$ sending the only object $*$ of $Y$ to $(X, a)$ and $y \in Y(*,*)$ to $\varphi_y$.

We denote by $\VGrp^Y$ the category of ($\VCat$)-enriched functors $(Y,b)\to\VGrp$ and natural transformations between them.

\begin{theorem}
If $V$ is cartesian, then the category $\VGrp$ is $S$-LACC with respect to the class $S$ of points underlying the product $V$-category structure.
\end{theorem}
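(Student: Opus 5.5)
Following the strategy of \cite[Proposition 4.4]{MMS SLACC} for monoids, we reduce the problem to the existence of right Kan extensions of enriched functors. The plan has four steps.

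\emph{Step 1: identify the fibres.} We show that, for every $V$-group $(Y,b)$, the fibre $\SPt_Y(\VGrp)$ is equivalent to the category $\VGrp^Y$. On objects, a point in $S$ over $Y$ is, up to isomorphism, a split extension \eqref{split} with $c=a\otimes b$. By Proposition \ref{p:action rep}, which applies since every structure is idempotent when $V$ is cartesian, this is the same as a $V$-homomorphism $\varphi\colon Y\to\VVAut(X)$. By the discussion preceding the statement, such a $\varphi$ is a $(\VCat)$-enriched functor $\Phi\colon Y\to\VGrp$. Conversely, an enriched functor $Y\to\VGrp$ sends every $y$ to an invertible $V$-homomorphism, because $Y$ is a group. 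Enrichedness then gives $b(y,y')\leq c(\varphi_y,\varphi_{y'})$, so we recover a $V$-homomorphism into $\VVAut(X)$. On morphisms, a morphism of points over $Y$ between $X\rtimes_\varphi Y$ and $X'\rtimes_{\varphi'}Y$ restricts to a $V$-homomorphism $u\colon X\to X'$ between the kernels. It is equivariant, $u\varphi_y=\varphi'_y u$, and equivariant maps are exactly natural transformations. Conversely, $u\times 1_Y$ is a $V$-functor for the product structures, and it is a group homomorphism precisely by equivariance.

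\emph{Step 2: change of base is restriction.} For $h\colon X\to Y$, the pullback of the point associated with $\Phi$ is the point over $X$ associated with $\Phi\circ h$. Here $h$ is viewed as an enriched functor between one-object $(\VCat)$-categories; it is enriched because $h$ is a $V$-functor. This holds because the pullback of a semidirect product $A\rtimes_\varphi Y$ along $h$ is $A\rtimes_{\varphi h}X$, and the pullback structure is again the product structure, as noted in Proposition \ref{p:S-protomod}. So $h^*$ corresponds to precomposition $h^*\colon\VGrp^Y\to\VGrp^X$.

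\emph{Step 3: build the right adjoint as a right Kan extension.} It remains to show that precomposition with $h$ has a right adjoint, namely enriched right Kan extension along $h$. This exists as soon as $\VGrp$ has the relevant enriched (weighted) limits. We would construct it explicitly as the cotensored end
\[ (\Pi_h\Psi)(*)=\int_{*\in X} [\,Y(*,*),\Psi(*)\,] , \]
that is, the set of maps $f\colon Y\to A$ with $f(h(x)+y)=\psi_x(f(y))$ for all $x\in X$ and $y\in Y$. These maps are further required to be $V$-functors $(Y,b)\to(A,a)$, so that the result sits inside the cotensor in $\VCat$. The group structure is pointwise, the $V$-structure is $\bigwedge_{y}a(f(y),g(y))$, and $Y$ acts by $(y'\cdot f)(y)=f(y+y')$.

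\emph{Step 4: verifications.} We must check several things: that the pointwise sum of $V$-functors is a $V$-functor, using $\wedge$-idempotency; that $y'\cdot f$ is still a $V$-functor; and that the action is a $V$-homomorphism $Y\to\VVAut$. For the last point we need $b(y_1,y_2)\leq\bigwedge_{f,y}a(f(y+y_1),f(y+y_2))$. This follows from $f$ being a $V$-functor together with $b(y_1,y_2)\leq b(y+y_1,y+y_2)$, the left translation property of $V$-groups. Finally we verify the universal property: natural transformations $\Phi h\Rightarrow\Psi$ correspond bijectively to natural transformations $\Phi\Rightarrow\Pi_h\Psi$. The correspondence sends $u$ to $x\mapsto(y\mapsto u(\varphi_y x))$. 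The main obstacle is this last step: one must show that the transpose lands in $V$-functors $Y\to A$ and is itself a $V$-homomorphism. This is exactly where the cartesian hypothesis (idempotency) and the enriched condition on $\Phi$ have to be used, in the same way as in the proof of Proposition \ref{p:action rep}.
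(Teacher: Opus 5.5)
There is a genuine gap in Step 3: the object you build is not a group. Equivariant $V$-functors $f\colon(Y,b)\to(A,a)$ are closed under pointwise sums, and this is where idempotency enters. They are not closed under pointwise negation, however. In a $V$-group, inversion reverses the structure, $a(z,z')\le a(-z',-z)$, so $-f$ is in general a $V$-functor into $(A,a^\circ)$, not into $(A,a)$. Concretely, take $h\colon 0\to Y$ with $Y=A=(\mathbb{Z},\le)$ as preordered groups and trivial action. Then your $(\Pi_h\Psi)(*)$ is the set of monotone maps $\mathbb{Z}\to\mathbb{Z}$, which contains $\mathrm{id}$ but not $-\mathrm{id}$. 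So cotensors in $\VGrp$ are not computed as in $\VCat$, and your list in Step 4 omits exactly the check that fails.

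The repair is to pass to the group of units of your monoid. These are the equivariant maps $f$ such that both $f$ and $-f$ are $V$-functors, i.e.\ $b(y,y')\le a(f(y),f(y'))\wedge a(f(y'),f(y))$ for all $y,y'\in Y$. With this choice:
\begin{enumerate}
\item this set is a sub-$V$-group, stable under $f\mapsto y'\cdot f$;
\item the action is enriched, by your translation argument;
\item the transpose $y\mapsto\alpha(\varphi_y(s))$ of an element $s$ lands in it, because $\varphi$ is enriched and its negative is the transpose of $-s$;
\item uniqueness follows by evaluating at $0_Y$.
\end{enumerate}

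For comparison, the paper takes $W=\widetilde{\psi}(*)$ to be \emph{all} equivariant maps $Y\to Z$, with no continuity condition. It checks only that each $\widetilde{\psi}_{y_0}$ is a $V$-functor, and uses the same counit $\varepsilon_*(u)=u(0_Y)$ and the same transpose as you. Your instinct that some continuity condition is needed is correct. Enrichment of $\widetilde{\psi}$ requires $b(y_1,y_2)\le\bigwedge_{u\in W}\bigwedge_{y\in Y}c(u(y+y_1),u(y+y_2))$. In the example above this fails for any non-monotone $u$, so with all maps $\widetilde{\psi}$ does not lie in $\VGrp^Y$. Both the paper's $W$ and your set must therefore be cut down to the two-sided version above; after that, the counit and transpose argument goes through verbatim. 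Your Steps 1 and 2, which the paper leaves largely implicit, are fine.
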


\begin{proof}
Let $h \colon X \to Y$ be a morphism in $\VGrp$. Thanks to Proposition \ref{p:action rep}, we get an equivalence of categories $\SPt_Y(\VGrp) \cong \VGrp^{Y}$. Hence, showing that the change-of-base functor of the subfibration $S\text{-cod}$:
\[ h^* \colon \SPt_Y(\VGrp) \to \SPt_X(\VGrp) \]
has a right adjoint is equivalent to showing that the functor
\[ L_h \colon \VGrp^{Y} \to \VGrp^{X} \]
has a right adjoint, where $L_h=\VGrp^{h}$ is given by composition with $h$. The right adjoint $R_h$ of $L_h$ is built as follows: for any $\psi \colon X \to \VGrp$, $R_h(\psi)$ is the right Kan extension of $\psi$ along $h$, which exists since $X$ is a small category and $\VGrp$ is complete (see \cite[Corollary X.3.2]{ML98}), and belongs to $\VGrp^X$ as we show below.

Given a $\VCat$-functor $\psi\colon X\to\VGrp$, with $(Z,c):=\psi(*)$ and $\psi_x:=\psi(x)\colon Z\to Z$, let $\widetilde{\psi}:=R_h(\psi)\colon Y\to\VGrp$ be defined by
\[W:=\widetilde{\psi}(*)=\{u\colon Y\to Z\,\mbox{ map}\,;\,\psi_x(u(y))=u(h(x)+y)\mbox{ for all }x\in X,\,y\in Y\},\]
with addition defined pointwise, and the $V$-category structure defined by
\[\omega(u,v)=\bigwedge_{y\in Y}\,c(u(y),v(y)),\]
which is easily seen to be a $V$-group, and, for every $y_0\in Y$, $\widetilde{\psi}_{y_0}\colon W\to W$ is defined by $\widetilde{\psi}_{y_0}(u)(y)=u(y+y_0)$, which is a $V$-functor since
\[\omega(u,v)=\bigwedge_{y\in Y}c(u(y),v(y))=\bigwedge_{y\in Y}c(u(y+y_0),v(y+y_0))=\omega(\widetilde{\psi}_{y_0}(u),\widetilde{\psi}_{y_0}(v))\]
(because, since $Y$ is a group, $y+y_0$ covers all the elements of $Y$).

The co-unit $\varepsilon\colon L_h\,R_h \to\Id_{\VGrp^X}$ of the adjunction $L_h\dashv R_h$ is given, for every $\psi\in \VGrp^X$, by
$\varepsilon_*\colon W\to Z$ with $\varepsilon_*(u)=u(0_Y)$ (where $Z=\psi(*)$ and $W=\widetilde{\psi}(*)$ are defined as above); $\varepsilon_*$ is clearly a $V$-homomorphism, and, moreover, for each $x\in X$,
the commutativity of the following diagram is guaranteed by the definition of $W$
\[\xymatrix{W\ar[r]^{\varepsilon_*}\ar[d]_{\widetilde{\psi}_{h(x)}}&Z\ar[d]^{\psi_x}\\
W\ar[r]_{\varepsilon_*}&Z.}\]
To show the universality of $\varepsilon$, given $\varphi\in\VGrp^Y$ and $\alpha\colon L_h(\varphi)\Rightarrow\psi$ we want to define $\overline{\alpha}\colon \varphi\Rightarrow\widetilde{\psi}$ so that $\overline{\alpha}$ is the unique natural transformation satisfying $\varepsilon\cdot\overline{\alpha}_h=\alpha$: let $S:=\varphi(*)$ and $\overline{\alpha}_*\colon S\to W$ be defined by $u_s(y):=(\overline{\alpha}_*(s)(u))(y)=\alpha_*(\varphi_y(s))$. It is straightforward to verify that $\varepsilon_*\cdot (\overline{\alpha}_h)_*=\alpha_*$:
\[\varepsilon_*((\overline{\alpha}_h)_*(s))=u_s(0_Y)=\alpha_*(\varphi_0(s))=\alpha_*(s),\]
and that $\overline{\alpha}$ is the unique natural transformation satisfying this condition.
\end{proof}

Similarly to what we observed at the end of the previous section, the previous theorem extends the result of \cite[Section 8]{CM} saying that, when $V$ is cartesian, the category $\VGrp_\sym$ of symmetric $V$-groups is LACC. \\

We conclude by observing that our results concerning action representability and local algebraic cartesian closedness for $V$-groups do not seem to extend to right $V$-groups. By \emph{right $V$-group} we mean a $V$-category $(X, a)$ equipped with a group operation $+$ such that, for all $z \in X$, the map $+z \colon (X, a) \to (X, a)$ sending $x$ to $x+z$ is a $V$-functor. The reason why our results seem to fail in such a context is that they are based on Theorem \ref{t:tensor}, whose proof uses in an essential way the fact that the whole group operation $+$ is a $V$-functor. We also observe that our Proposition $3.1$ in \cite{CM} is stated improperly. Indeed, the invariance by shifting condition consists of the validity of the two following equalities:
\[ (\forall x,x',x''\in X)\;\;a(x',x'')=a(x'+x,x''+x), \]
\[ (\forall x,x',x''\in X)\;\;a(x',x'')=a(x+x',x+x''). \]

\section*{Acknowledgements}
We thank Dirk Hofmann for suggesting us the proof of Lemma \ref{lem:idpt}.\bigskip

Partial financial support by the Centre for Mathematics of the University of Coimbra (CMUC, https://doi.org/10.54499/UID/00324/2025) under the Portuguese Foundation for Science and Technology (FCT), Grants UID/00324/2025 and UID/PRR/00324/2025 is acknowledged.

The second author is member of the Gruppo Nazionale per le Strutture Algebriche, Geometriche e le loro Applicazioni (GNSAGA) dell'Istituto Nazionale di Alta Matematica ``Francesco Severi''.

This work was supported by the Shota Rustaveli National Science Foundation of Georgia (SRNSFG), through grant FR-24-9660, ``Categorical methods for the study of cohomology theory of monoid-like structures: an approach through Schreier extensions''.

\end{document}